\documentclass[11pt]{article}
\usepackage{amsmath}
\usepackage{amsthm}
\usepackage{amssymb}

\newtheorem{thm}{Theorem}
\newtheorem{cor}{Corollary}
\newtheorem{proposition}{Proposition}

\newtheorem{remark}{Remark}

\thispagestyle{empty}
\topmargin -0.1in
\textheight 8.5in
\oddsidemargin  31pt
\textwidth 5.5in

\begin{document}

\vspace*{50px}

\begin{center}\LARGE

\textbf{Some Families of Identities for Integer Partition Function}

\bigskip\large

Ivica Martinjak, Dragutin Svrtan

University of Zagreb

Zagreb, Croatia


\end{center}

 
\begin{abstract}  
We give a series of recursive identities for the number of partitions with exactly $k$ parts and with  constraints on both the minimal difference among the parts and the minimal part. Using these results we demonstrate that the number of partitions of $n$ is equal to the number of partitions of $2n+d{n \choose 2}$ with $n$ $d$-distant parts. We also provide a direct proof for this identity. This work is the result of our aim at finding a bijective proof for Rogers-Ramanujan identities.
\end{abstract} 

\noindent {\bf Keywords:} partition identity, partition function, Euler function, pentagonal numbers, Rogers-Ramanujan identities.\\
\noindent {\bf Mathematical Subject Classifications:} 05A17, 11P84.


\section{Introduction}

The sequence of non-negative integers $\lambda=(\lambda_1,\lambda_2,...,\lambda_l)$ in weakly decreasing order is called a {\it partition}. The numbers $\lambda_i$ are {\it parts} of $\lambda$. The number of parts is the {\it length} of $\lambda$, denoted by $l(\lambda)$ and the sum of parts $|\lambda|$ is the {\it weight} of $\lambda$. Having $|\lambda|=n$ it is said that $\lambda$ is a partition of $n$, denoted $\lambda \vdash n$.

The set of all partitions is denoted by ${\mathcal{P}}$ and the set of partitions of $n$ by ${\mathcal{P}}_n$. The number of elements in ${\mathcal{P}}_n$ is denoted by $p(n)$. The number of partitions of $n$ with exactly $k$ parts is denoted by $p_k(n)$ while $p_{\le k}(n)$ is the number of partitions of $n$ with at most $k$ parts $p_{\le k}(n)=\sum_{i=0}^k p_i(n).$ The number of partitions of $n$ having minimal part at least $r$ is denoted by $p(n,r)$. Furthermore, of our interests are partitions with distant parts. Let $p^{(1)}(n)$ be the number of partitions of $n$ with the characteristic that the difference between any two parts is at least $1$. The number of partitions with $2$-distant parts, that represent the left hand side of the first Rogers-Ramanujan identity, we denote by $p^{(2)}(n)$. It is also said that such partitions have super-distant parts. Accordingly, $p^{(d)}(n)$ is the number of partitions of $n$ with $d$-distant parts.

It is known that the {\it Euler identity} 
\begin{equation}
\phi(x)P(x)=1
\end{equation}
provides a recursive computation of the numbers $p(n)$. Namely, setting the {\it Euler function} $\phi(x)$ and the {\it partition function} $p(x)$ in the previous identity we get
\begin{equation*}
(1-x-x^2+x^5+x^7 -x^{12}...)(1+p(1)x+p(2)x^2+p(3)x^3+...)=1.
\end{equation*}
Since the coefficient of $x^n$, $n>0$ in this product is equal to $0$, we obtain the recurrence relation
\begin{equation}
p(n)=p(n-1)+p(n-2)-p(n-5)-p(n-7)+p(n-12)...
\end{equation}
where the general terms include the pentagonal number
$$\frac{j(3j-1)}{2}$$
and its $j$-th successor
$$\frac{j(3j+1)}{2}.$$

There is also a recurrence relation that provides computing the number of partitions with exactly $k$ parts
\begin{equation}\label{psThm1}
p_k(n)=p_{k-1}(n-1)+p_k(n-k).
\end{equation}
Namely, a partition in the set of partitions $\lambda \vdash n$ with $k$ parts either has number 1 as a part or not. The latter means that subtracting the partition for this part 1 gives a partition from the set ${\mathcal{P}}_{n-1}$ with $k$--1 parts. On the other hand, when the minimal part in the partition is equal or greater than $2$, subtracting $1$ from every part of this partition leave the number of parts unchanged while the weight of partition decreases by $k$. This explains the above recurrence relation.

In this paper we extend these ideas, searching for similar recurrences for other types of partitions. In particular, we are interested in partitions of length $k$ having 1-distant parts, 2-distant parts, etc. Underlying motivation for this work is to find a new bijective proof of Rogers-Ramanujan identities. Namely, recurrences for both l.h.s and r.h.s. of the identity possibly give an insight into the matching of related partitions. Recall that the first bijective proof is done by Garsia and Milne while the recent one is provided by Pak and Boulet \cite{BoPa}.

It is worth mentioning that an efficient way of computing the number of partitions of $n$ with at most $k$ parts is provided by partial fraction decomposition of the generating function. This idea dates back to Cayley, it is developed by Munagi \cite{Mun} and formulae for $1 \le k \le 70$ are recently derived by Sills and Zeilberger \cite{SiZe}.

\section{Partitions of length $k$ with $1$-distant parts}

\begin{proposition}\label{Lem1} 
The number of partitions $\lambda \vdash n$ of length $k$ with 1-distant parts and with minimal part at least 2 is equal to the alternating sum of numbers of partitions $\mu_i \vdash n-i, \enspace l(\mu_i)=k-i$, $ 0 \leq i  \leq k$ with $1$-distant parts
\begin{equation}
p^{(1)}_k(n,2)=\sum_{i=0}^{k}(-1)^i p_{k-i}^{(1)}(n-i).
\end{equation}
\end{proposition}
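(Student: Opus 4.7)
The plan is to deduce the identity from a single ``split-by-smallest-part'' recurrence and then iterate it into a telescoping alternating sum, exactly analogous to the derivation of \eqref{psThm1} described above.

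First, I would establish the elementary relation
\begin{equation*}
p_k^{(1)}(n) = p_k^{(1)}(n,2) + p_{k-1}^{(1)}(n-1,2).
\end{equation*}
This follows by classifying a 1-distant partition of $n$ with exactly $k$ parts according to whether its minimal part equals $1$ or is at least $2$. In the first case, deleting that smallest part $1$ yields a partition of $n-1$ with $k-1$ parts that are still 1-distant (distinctness is preserved) and whose minimal part is now at least $2$; this bijection accounts for the second summand. The second case is counted directly by the first summand.

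Next, I would rewrite this relation as
\begin{equation*}
p_k^{(1)}(n,2) = p_k^{(1)}(n) - p_{k-1}^{(1)}(n-1,2)
\end{equation*}
and iterate it: substitute the same identity with $(n,k)$ replaced by $(n-1,k-1)$, then by $(n-2,k-2)$, and so on. After $k$ applications the right-hand side becomes
\begin{equation*}
\sum_{i=0}^{k-1} (-1)^i\, p_{k-i}^{(1)}(n-i) \;+\; (-1)^k\, p_0^{(1)}(n-k,2).
\end{equation*}

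The last step is to reconcile the boundary term with the form claimed in the proposition. Here one notes that $p_0^{(1)}(m,2) = p_0^{(1)}(m)$: both count the empty partition, which exists precisely when $m=0$ and satisfies any imposed lower bound on its (nonexistent) parts vacuously. Substituting this equality absorbs the boundary term into the $i=k$ slot of the sum and yields the stated identity. I do not anticipate a real obstacle; the only care needed is in checking that the iteration terminates cleanly and that the boundary convention used for $p_0^{(1)}$ is consistent with the convention implicit in the statement.
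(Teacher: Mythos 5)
Your proposal is correct and follows essentially the same route as the paper: the same split of $p_k^{(1)}(n)$ according to whether $1$ occurs as a part, giving $p_k^{(1)}(n,2)=p_k^{(1)}(n)-p_{k-1}^{(1)}(n-1,2)$, followed by iteration into the alternating sum. Your explicit treatment of the boundary term $p_0^{(1)}(n-k,2)=p_0^{(1)}(n-k)$ is a detail the paper leaves implicit, and it is handled correctly.
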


\begin{proof} 
The number of partitions of $n$ of length $k$ and with $1$-distant parts and with the minimal part at least 2 equals the difference
\begin{equation}
p^{(1)}_k(n,2)=p^{(1)}_k(n)-p^{(1)}_{k-1}(n-1,2).
\end{equation}
Namely, if we add the number 1 as a part to the partition of $n$--1 of length $k-1$ of with 1-distant parts and the mininal part at least 2, the resulting partition will be $k$-length partition of $n$ with saved 1-distant propositionerty and with the minimal part exactly 1. Now the above statement follows immediately. Furthermore, we have
\begin{equation*}
p^{(1)}_k(n,2)=p^{(1)}_k(n)-p^{(1)}_{k-1}(n-1)+p^{(1)}_{k-2}(n-2,2)
\end{equation*}
which completes the statement of proposition.
\end{proof}

There is an analogy with the relation \ref{psThm1} for the partitions with $1$-distant parts and it is given by the next proposition.

\begin{proposition}\label{Thm1}
The number of partitions $\lambda \vdash n$ of length $k$ with $1$-distant parts is equal to the sum of the numbers of partitions $\mu \vdash n-1$ of length k--1 with 1-distant parts having the minimal part equal or greater than 2 and the number of partitions $\nu \vdash n-k$ of length $k$ with 1-distant parts 
\begin{equation}
p^{(1)}_k(n)=p^{(1)}_{k-1}(n-1,2)+p^{(1)}_k(n-k).
\end{equation}
\end{proposition}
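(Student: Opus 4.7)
The plan is to mimic the classical argument behind recurrence \eqref{psThm1}, splitting the set of partitions counted by $p^{(1)}_k(n)$ according to whether or not $1$ appears as a part. Since the parts are $1$-distant (i.e.\ strictly decreasing), this dichotomy is clean: either the minimal part equals $1$, or it is at least $2$.

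In the first case I would remove the part $1$. What remains is a partition of $n-1$ of length $k-1$ whose parts are still $1$-distant; moreover, since the original parts were strictly decreasing, the new minimal part is automatically at least $2$. The inverse map (append a $1$) is clearly well defined on partitions of $n-1$ of length $k-1$ with $1$-distant parts and minimal part $\geq 2$, so this class has cardinality $p^{(1)}_{k-1}(n-1,2)$.

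In the second case I would subtract $1$ from every part. Since all parts were at least $2$, all new parts are at least $1$; the length stays $k$, the weight drops by $k$, and consecutive differences are unchanged, so the $1$-distant property is preserved. Conversely, adding $1$ to each part of any partition of $n-k$ of length $k$ with $1$-distant parts produces a partition of $n$ of length $k$ with $1$-distant parts and minimal part $\geq 2$. Hence this class has cardinality $p^{(1)}_k(n-k)$. Summing the two cases gives the claimed identity.

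There is no real obstacle: the only point requiring a line of justification is that in the first case the minimum of the truncated partition is forced to be $\geq 2$, which is exactly the reason the subscript $2$ appears on the right-hand side and is what distinguishes this recurrence from \eqref{psThm1}. I would therefore keep the write-up parallel to the discussion that follows equation \eqref{psThm1}, just adding the sentence explaining why the $1$-distant hypothesis upgrades "minimal part $\geq 1$" to "minimal part $\geq 2$" after deleting a part equal to $1$.
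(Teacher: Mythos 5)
Your proposal is correct and follows essentially the same argument as the paper: the paper also splits the partitions according to whether $1$ occurs as a part, deletes the part $1$ in the first case (noting the resulting minimal part is at least $2$), and subtracts $1$ from every part in the second case, observing that both correspondences are invertible. Your extra remark about why strict decrease forces the truncated partition to have minimal part at least $2$ is a welcome clarification but not a departure from the paper's route.
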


\begin{proof}
Let separate the partitions of $n$ with $k$ $1$-distant parts into two sets, one that has the number 1 as a part and another one with parts equal to or greater than 2. The first set can be built from partitions of length $k$--1 of the number $n$--1 with $1$-distant parts greater of or equal to 2, by adding 1 as a part. The second set we obtain from the set of all partitions of $n$--k of length $k$ with 1-distant parts by increasing every part by 1. Obviously, these correspondences are invertible which completes the proof.
\end{proof}

Using the Propositions \ref{Lem1} and \ref{Thm1} we will prove the recurrence relations for the numbers $p_k^{(1)}(n)$, expressed in the next corollary. Later we will point out to a shorter proof too.

\begin{cor}\label{Cor1}  
The number of partitions $\lambda \vdash n$ with $k$ 1-distant parts is equal to the sum of numbers of partitions $\mu_i \vdash n-ik$, $i=1,2,...$ with k--1 $1$-distant parts
\begin{equation}
p_k^{(1)}(n) = \sum_{i \ge 1} p_{k-1}^{(1)}(n-ki). \label{Recu2}
\end{equation}
\end{cor}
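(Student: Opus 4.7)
The plan is to iterate Proposition \ref{Thm1} to express $p_k^{(1)}(n)$ as a telescoping sum, then replace each term by a simpler quantity using a bijection of the same flavor used inside the proof of Proposition \ref{Thm1}.

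First, I would rewrite Proposition \ref{Thm1} as
\begin{equation*}
p_k^{(1)}(n) - p_k^{(1)}(n-k) = p_{k-1}^{(1)}(n-1,2),
\end{equation*}
and apply it iteratively with $n$ replaced by $n,\,n-k,\,n-2k,\ldots$ . Since $p_k^{(1)}(m)=0$ whenever $m<k(k+1)/2$, the iteration terminates after finitely many steps and telescopes to
\begin{equation*}
p_k^{(1)}(n) = \sum_{j\ge 0} p_{k-1}^{(1)}(n-1-jk,\,2).
\end{equation*}

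Second, I would establish the auxiliary identity $p_{k-1}^{(1)}(m,2)=p_{k-1}^{(1)}(m-(k-1))$. This is immediate from the map that subtracts $1$ from every part: a partition of $m$ of length $k-1$ with $1$-distant parts and smallest part $\ge 2$ is sent bijectively to an arbitrary partition of $m-(k-1)$ of length $k-1$ with $1$-distant parts, the inverse being the map that adds $1$ to every part. (This is essentially the same bijection that appears in the proof of Proposition \ref{Thm1}; one could instead expand $p_{k-1}^{(1)}(m,2)$ by Proposition \ref{Lem1}, but that produces an alternating sum requiring additional simplification to collapse back to $p_{k-1}^{(1)}(m-(k-1))$.)

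Substituting this identity into the telescoped expression gives
\begin{equation*}
p_k^{(1)}(n) \;=\; \sum_{j\ge 0} p_{k-1}^{(1)}\!\bigl(n-1-jk-(k-1)\bigr) \;=\; \sum_{j\ge 0} p_{k-1}^{(1)}\!\bigl(n-k(j+1)\bigr) \;=\; \sum_{i\ge 1} p_{k-1}^{(1)}(n-ki),
\end{equation*}
which is exactly \eqref{Recu2}. The only real obstacle is bookkeeping: one must be careful to verify that the telescoping terminates (which it does by the vanishing of $p_k^{(1)}$ at small arguments) and to state the auxiliary bijection between partitions with smallest part $\ge 2$ and unrestricted partitions of length $k-1$ cleanly. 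No deeper combinatorial input is needed once Propositions \ref{Lem1} and \ref{Thm1} are in hand.
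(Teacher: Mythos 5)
Your proof is correct, but it takes a different route from the one the paper gives for this corollary. You agree with the paper on the first step: both iterate Proposition \ref{Thm1} to telescope $p_k^{(1)}(n)$ into $\sum_{j\ge 0} p_{k-1}^{(1)}(n-jk-1,2)$, with termination guaranteed because $p_k^{(1)}$ vanishes below $k(k+1)/2$. You then diverge: the paper expands each term $p_{k-1}^{(1)}(n-jk-1,2)$ via the alternating sum of Proposition \ref{Lem1}, assembles the results into a two-dimensional array, and argues (rather sketchily) that repeated re-expansion cancels everything except the claimed terms. You instead dispose of the restriction ``smallest part $\ge 2$'' in one stroke with the bijection that subtracts $1$ from every part, giving $p_{k-1}^{(1)}(m,2)=p_{k-1}^{(1)}(m-(k-1))$, after which the arithmetic $n-1-jk-(k-1)=n-(j+1)k$ finishes the proof. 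Your auxiliary identity is exactly the $d=1$ case of Proposition \ref{Lem3}, and your argument is essentially the proof the paper itself uses later for the general Corollary \ref{Cor3}; it is presumably the ``shorter proof'' the authors allude to just before stating Corollary \ref{Cor1}. What you lose relative to the paper's array argument is nothing of substance --- your version is shorter, avoids the alternating sums entirely, and sidesteps the cancellation step that the paper leaves at the level of ``one can convince ourself.'' The one point worth stating explicitly is that the subtract-one map preserves the number of parts and the $1$-distant property and lands on partitions with all parts $\ge 1$, which you do note; with that, the proof is complete.
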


\begin{proof}
The proof is done in the three phases. Firstly we show that the relation 
\begin{equation}\label{Rel10}
p_k^{(1)}(n)=p_{k-1}^{(1)}(n-1,2)+p_{k-1}^{(1)}(n-k-1,2)+\ldots+p_{k-1}^{(1)}(n-qk-1,2) 
\end{equation}
is the consequence of the previous proposition. It holds
\begin{eqnarray*}
p_k^{(1)}(n)&=&p_{k-1}^{(1)}(n-1,2)+p_k^{(1)}(n-k)\\
p_k^{(1)}(n-k)&=&p_{k-1}^{(1)}(n-k-1,2)+p_k^{(1)}(n-2k)\\
\ldots\\
p_k^{(1)}(n-qk)&=& p_{k-1}^{(1)}(n-qk-1,2)+p_k^{(1)}(n-(q+1)k)
\end{eqnarray*}
where $q \in {\mathbb N}$. Since the argument in the second term in the last equality is less than $k$ the value of the term is 0 and the previous statement follows immediately. Now, using the Proposition \ref{Lem1} every term in the relation \ref{Rel10} can be represented as an alternative sum, which leads to the next array of numbers whose sum equals the number $p_k^{(1)}(n)$.
\begin{eqnarray}\label{TheMainArray}
\begin{array}{rrrrr} 
p_k^{(1)}(n)=&+p_{k-1}^{(1)}(n-1) & +p_{k-1}^{(1)}(n-k-1) &\ldots&+p_{k-1}^{(1)}(n-qk-1) \\ 
&-p_{k-2}^{(1)}(n-2)& - p_{k-1}^{(1)}(n-k-2) &\ldots&-p_{k-1}^{(1)}(n-qk-2) \\
&\ldots\\
&...p_{0}^{(1)}(n-k)& ...p_{k-1}^{(1)}(n-2k) &\ldots&...p_{k-1}^{(1)}(n-qk-2)
\end{array}
\end{eqnarray}
Finally, we apply Propositions \ref{Thm1} and \ref{Lem1} to every term in the first row of this array. Again we obtain similar arrays, starting with the terms $p_{k-2}^{(1)}(n-2)$,  $p_{k-2}^{(1)}(n-k-2),\ldots,p_{k-2}^{(1)}(n-qk-2)$. One can convince ourself that the first rows in these arrays cancel all remaining terms on the right hand side of the relation \ref{TheMainArray} and we get arrays beggining with $p_{k-2}^{(1)}(n-k-1)$,  $p_{k-2}^{(1)}(n-2k-1),\ldots,p_{k-2}^{(1)}(n-qk-k-1)$. The sum of elements of these arrays corresponds to the numbers $p_{k-1}^{(1)}(n-k), p_{k-1}^{(1)}(n-2k),\ldots,p_{k-1}^{(1)}(n-(q+1)k)$ and we obtain the relation
\begin{equation*}
p_{k}^{(1)}(n)=p_{k-1}^{(1)}(n-k)+p_{k-1}^{(1)}(n-2k)+\ldots+p_{k-1}^{(1)}(n-(q+1)k).
\end{equation*}
This completes the statements of the corollary.
\end{proof}

In particular, it follows from the Corollary \ref{Cor1}:
\begin{eqnarray*}
p_4^{(1)}(n)&=&\sum_{i \ge 1} p_3^{(1)}(n-4i)\\
          &=&\sum_{i,j \ge 1} p_2^{(1)}(n-3i-4j)\\
          &=&\sum_{i,j,l \ge 1} p_1^{(1)}(n-2i-3j-4l).
\end{eqnarray*}
In order to illustrate these results we are going to calculate the number of partitions of $22$ with four $1$-distant parts. According to the previous corollary, we have
\begin{eqnarray*}
p^{(1)}_4(22)&=&p^{(1)}_3(18)+p^{(1)}_3(14)+p^{(1)}_3(10)+p^{(1)}_3(6)+p^{(1)}_3(2)\\
&=&34.
\end{eqnarray*}
On the other hand, we can represent $p^{(1)}_4(22)$ as the sum of the number of partitions of length 2:
\begin{eqnarray*}
p^{(1)}_4(22)&=&p^{(1)}_2(15)+p^{(1)}_2(12)+p^{(1)}_2(9)+p^{(1)}_2(6)+p^{(1)}_2(3)\\
&+&p^{(1)}_2(11)+p^{(1)}_2(8)+p^{(1)}_2(5)+p^{(1)}_2(2)\\
&+&p^{(1)}_2(7)+p^{(1)}_2(4)+p^{(1)}_2(1)\\
&+&p^{(1)}_2(3)\\
&=&34.
\end{eqnarray*}
In the third case 39 terms result, five of them being zero.

\section{Partitions of the Rogers-Ramanujan type of fixed length}

The previous facts for partitions with 1-distant parts we can extend to partitions with any difference $d$ among the parts. 

Adding $d$ to the every of $k$ parts of a partition of $n$, we obtain a partition of $n+dk$ of length $k$ with saved difference among the parts. The additional characteristic of the resulting partition is that the minimal part increases to at least $d+1$. Since the inverse operation gives the starting partition, we proved the next proposition.

\begin{proposition}\label{Lem3} 
The number of partitions $\lambda \vdash n$ of length $k$ with $d$-distant parts is equal to the number of partitions $\mu \vdash n+dk$ of length $k$  with $d$-distant parts and the smallest part d+1
\begin{equation}
p^{(d)}_k(n)=p^{(d)}_k(n+dk,d+1).
\end{equation}
\end{proposition}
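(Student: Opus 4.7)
The plan is to give a direct bijective proof by exhibiting an explicit weight-shifting map between the two sets of partitions and verifying it has a two-sided inverse; the paragraph preceding the proposition already sketches exactly this idea, so the task is to spell it out carefully.

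First I would define the map $\Phi$ sending a partition $\lambda=(\lambda_1,\lambda_2,\ldots,\lambda_k)$ of $n$ with $d$-distant parts to $\Phi(\lambda)=(\lambda_1+d,\lambda_2+d,\ldots,\lambda_k+d)$. The verification that $\Phi(\lambda)$ belongs to the claimed target set reduces to four routine checks: (i) the sequence is still weakly decreasing and has length $k$; (ii) its weight is $n+dk$; (iii) adding the same constant $d$ to every part preserves all pairwise differences, hence the $d$-distant property is maintained; (iv) since $\lambda_k\ge 1$, the smallest part of $\Phi(\lambda)$ equals $\lambda_k+d\ge d+1$, which is precisely the minimal-part hypothesis appearing on the right-hand side.

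Next I would write down the obvious candidate inverse $\Psi$, defined on a partition $\mu=(\mu_1,\ldots,\mu_k)$ counted by $p^{(d)}_k(n+dk,d+1)$ by $\Psi(\mu)=(\mu_1-d,\ldots,\mu_k-d)$. The minimal-part hypothesis $\mu_k\ge d+1$ ensures that every new entry $\mu_i-d\ge 1$, so $\Psi(\mu)$ is a genuine partition of $n$; the same three checks on length, weight, and preserved differences show that $\Psi(\mu)$ is of length $k$ with $d$-distant parts. The identities $\Phi\circ\Psi=\mathrm{id}$ and $\Psi\circ\Phi=\mathrm{id}$ are immediate from the definitions, so $\Phi$ is a bijection and the two counts coincide.

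I do not anticipate any real obstacle: the only point that deserves to be made explicit in writing is why the precise bound $d+1$, rather than anything larger, appears in the target count. This is forced by the extremal case $\lambda_k=1$, which after shifting gives minimal part exactly $d+1$, so $d+1$ is both necessary (to accommodate every preimage) and sufficient (to guarantee that $\Psi$ lands among positive-part partitions). Once this is stated, the proof consists entirely of the four bookkeeping items above.
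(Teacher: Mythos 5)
Your proposal is correct and is essentially identical to the paper's own argument, which proves the proposition by adding $d$ to every part and noting that this shift preserves length and pairwise differences, raises the minimal part to at least $d+1$, and is invertible. Your write-up merely makes the routine verifications (length, weight, differences, minimal part, two-sided inverse) explicit.
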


The next proposition is worth for any difference at least $d$ amoung parts of a partition. It can be proved applying the analogue reasoning as for Proposition \ref{Thm1}. Similarly, the following corollary can be understand as the generalization of the previous corollary, that holds for any difference $d$ amoung the parts of a partition.

\begin{proposition}\label{Thm3} 
The number of partitions $\lambda \vdash n$ of length $k$ with d-distant parts is equal to the sum of number of partitions $\mu \vdash n-1$ with d-distant parts that are equal to or greater than $d+1$ and the number of partitions $\nu \vdash n-k$ of length $k$ with d-distant parts
\begin{equation}
p^{(d)}_k(n)=p^{(d)}_{k-1}(n-1,d+1)+p^{(d)}_k(n-k).
\end{equation}
\end{proposition}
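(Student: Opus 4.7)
The plan is to generalize the bijective dichotomy used in the proof of Proposition \ref{Thm1}, splitting the set of partitions of $n$ of length $k$ with $d$-distant parts according to whether the smallest part equals $1$ or is at least $2$.

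First, I would handle the case where the smallest part of $\lambda$ equals $1$. Removing this part leaves a partition of $n-1$ of length $k-1$ whose parts are still $d$-distant. Since the original parts differed by at least $d$, the new smallest part must be at least $1+d=d+1$. So this operation sets up a bijection between partitions in this case and those counted by $p^{(d)}_{k-1}(n-1,d+1)$; the inverse is simply prepending a part equal to $1$, which preserves the $d$-distant property precisely because the second-smallest part is at least $d+1$.

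Next, I would handle the case where the smallest part of $\lambda$ is at least $2$. Here I subtract $1$ from every part. The weight drops by exactly $k$, the length stays $k$, and the differences between consecutive parts are unchanged, so the resulting partition still has $d$-distant parts. All parts remain positive since the minimum was at least $2$. This gives a bijection onto the partitions counted by $p^{(d)}_k(n-k)$, invertible by adding $1$ back to every part.

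I do not expect any real obstacle: the argument is a direct reproduction of the $d=1$ reasoning from Proposition \ref{Thm1}, with the only subtle point being the verification that the smallest surviving part in the first case is at least $d+1$ rather than just $2$, which is automatic from the $d$-distant condition. Summing the two disjoint cases yields the claimed recurrence.
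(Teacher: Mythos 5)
Your proof is correct and follows exactly the route the paper intends: the paper omits a written proof of this proposition, saying only that it follows by ``the analogue reasoning as for Proposition \ref{Thm1},'' and your case split on whether the smallest part is $1$ (remove it, noting the new minimum is at least $d+1$) or at least $2$ (subtract $1$ from every part) is precisely that argument carried over to general $d$.
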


\begin{cor}\label{Cor3} 
The number of partitions $\lambda \vdash n$ with $k$ $d$-distant parts is equal to the sum of numbers of partitions $\mu_i \vdash n-ik+d-1$, $i=d,d+1,...$ with $k$-$1$ $d$-distant parts
\begin{eqnarray}
p_k^{(d)}(n) &=& \sum_{i \ge d} p_{k-1}^{(d)}(n-ki+d-1).
\end{eqnarray}
\end{cor}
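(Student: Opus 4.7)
The plan is to combine Propositions \ref{Thm3} and \ref{Lem3} in a direct one-pass manner, avoiding the more intricate array expansion used for Corollary \ref{Cor1}.

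First I will iterate Proposition \ref{Thm3}. Starting from
\begin{equation*}
p^{(d)}_k(n) = p^{(d)}_{k-1}(n-1,\, d+1) + p^{(d)}_k(n-k),
\end{equation*}
and repeatedly applying the same relation to the second summand on the right, I telescope out all of the $p^{(d)}_k(n - jk)$ carry terms. The recursion terminates because $p^{(d)}_k(m) = 0$ as soon as $m$ drops below the minimal possible weight $k + d\binom{k}{2}$ of a $d$-distant partition with $k$ parts, so after finitely many steps I arrive at
\begin{equation*}
p^{(d)}_k(n) = \sum_{j \ge 0} p^{(d)}_{k-1}(n - jk - 1,\, d+1).
\end{equation*}

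Second, I will invoke Proposition \ref{Lem3} with $k$ replaced by $k-1$ to strip the ``minimal part at least $d+1$'' constraint from each summand. That proposition rearranges to $p^{(d)}_{k-1}(N,\, d+1) = p^{(d)}_{k-1}(N - d(k-1))$, so substituting $N = n - jk - 1$ converts each term into
\begin{equation*}
p^{(d)}_{k-1}\bigl(n - (j+d)k + d - 1\bigr).
\end{equation*}
Reindexing with $i = j + d$, so that $j \ge 0$ becomes $i \ge d$, gives exactly $\sum_{i \ge d} p^{(d)}_{k-1}(n - ik + d - 1)$, as claimed.

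The only bookkeeping to watch is the index shift $i = j + d$ together with the identification of the minimal-weight threshold that truncates the iteration; neither should present a real obstacle. An attractive byproduct is that this route specialises at $d = 1$ to a substantially shorter proof of Corollary \ref{Cor1} than the array argument presented in the text.
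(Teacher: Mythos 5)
Your proposal is correct and coincides with the paper's own proof of Corollary \ref{Cor3}: the paper likewise telescopes Proposition \ref{Thm3} to obtain $\sum_j p_{k-1}^{(d)}(n-jk-1,d+1)$ and then shifts each argument down by $d(k-1)$ via Proposition \ref{Lem3}, which is exactly your index change $i=j+d$. (The array expansion you mention was only used for Corollary \ref{Cor1}; your closing observation that the $d=1$ specialisation shortens that proof is the same remark the paper makes just before Corollary \ref{Cor1}.)
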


\begin{proof}
For $q \in {\mathbb N}$, applying the Proposition \ref{Thm3} we have
\begin{eqnarray*}
p_k^{(d)}(n)&=&p_{k-1}^{(d)}(n-1,d+1)+p_k^{(d)}(n-k)\\
p_k^{(d)}(n-k)&=&p_{k-1}^{(d)}(n-k-1,d+1)+p_k^{(d)}(n-2k)\\
\ldots\\
p_k^{(d)}(n-qk)&=& p_{k-1}^{(d)}(n-qk-1,d+1)+p_k^{(d)}(n-(q+1)k)
\end{eqnarray*}
which proves the relation
\begin{equation*}
p_k^{(d)}(n)=p_{k-1}^{(d)}(n-1,d+1)+p_{k-1}^{(d)}(n-k-1,d+1)+\ldots+p_{k-1}^{(d)}(n-qk-1,d+1). 
\end{equation*}
According to the Proposition \ref{Lem3} we have to decrease the argument in every term by $d(k-1)$ in order to get the partitions without any constraint on the minimal part. This gives the identity
\begin{equation*}
p_k^{(d)}(n)=p_{k-1}^{(d)}(n-dk+d-1)+p_{k-1}^{(d)}(n-(d+1)k+d-1)+\ldots+p_{k-1}^{(d)}(n-(d+q)k+d-1). 
\end{equation*}
and proves the statement of the corollary.
\end{proof}

As an example we are going to expose that there are $12$ partitions of 18 with three 2-distant parts. According to the previous corollary we have the calculation as follows:
\begin{eqnarray*}
p^{(2)}_3(18)&=&p^{(2)}_2(13)+p^{(2)}_2(10)+p^{(2)}_2(7)+p^{(2)}_2(4)+p^{(2)}_2(1)\\
&=&5+4+2+1\\
&=&12.
\end{eqnarray*}

\begin{remark}
The previous corollary provides direct computing of the number of partitions representing the l.h.s. of the first Rogers-Ramanujan identity (when $d$=2). Having in mind that there is the one to one correspondence between these partitions and partitions having exactly one Durfee square \cite{And}, there is a more efficient computation. The similar holds for the partitions representing the l.h.s. of the second Rogers-Ramanujan identity. More precisely, it can be shown that 

\begin{eqnarray*}
p^{(2)}(n)&=& \sum_{i \ge 1} p_{\leq i}(n-i^2)\\
p^{(2)}(n,2)&=& \sum_{i \ge 1} p_{\leq i}(n-i(i+1)).
\end{eqnarray*}
\end{remark}

Note in our first example that the recurrence for $p^{(1)}_4(22)$ begins with the number of partitions of $18$ and continue with the difference 4. 
However, in our second example the first difference is $5(=18-13)$ while the other differences are equal to 3. So, for the partitions with 1-distant parts the first difference is always equal to the any other differences, for every $k$; assuming we deal with recurrence relation for the number of partition with $k$ parts expanded into terms of the number of partitions with $k$--1 parts. On the other hand, for the partitions with 2-distant parts when $k=3$ the first difference is $5$ and it increase by $2$ as $k$ increses by 1. 

Clearly, the Corollary \ref{Cor3} shows that the difference among arguments on the right hand side of the equality is always $k$ while the difference $\delta$ between the argument on the left hand side and the first argument on the right hand side depend on both $k$ and $d$. The difference $\delta$ inceases by $d$ as the $k$ increases by 1. The next table presents these first differences for the partitions with 0, 1, 2 and 3-distant parts, denoted $\delta_0, \delta_1, \delta_2, \delta_3$ respectively.
\begin{eqnarray*}
\begin{array}{rrrrr} 
k & \delta_{0} & \delta_{1} & \delta_{2} & \delta_{3} \\
3 & 1 & 3 & 5 & 7\\
4 & 1 & 4 & 7 & 10\\
5 & 1 & 5 & 9 & 13\\
\end{array}
\end{eqnarray*}

\section{Families of identities for $p(n)$}

An immediate outcome of the relation \ref{psThm1} is a nice identity between $p(n)$ and the number of partitions of $2n$ with $n$ parts. Moreover, there is an equality between $p(n)$ and the number of partitions of $mn+n$ with $mn$ parts
\begin{equation}\label{Rel39}
p(n)=p_{mn}(mn+n), \enspace m \ge1.
\end{equation}
The natural question is whether there is a similar identity between $p(n)$ and the number of partitions of a certain natural number with $1$-distant parts, and in general with $d$-distant parts. It can be shown that for $d=1$ it holds 
\begin{equation}
p(n)=p_{n}^{(1)}(2n+{n \choose 2}).
\end{equation}
This identity is generalized in the next theorem. Following the manner of our previous proofs, firstly it is shown algebraically. Than we provide a direct bijection that proves this identity. Note that, similarly, Corollary \ref{Cor3} can also be proved combinatorially.  

According to the Corollary \ref{Cor3} $p_k^{(d)}(n)$ equals either of the following sums 
\begin{eqnarray}
\sum_{i_1,i_2,\ldots,i_{k-2} \ge d} p_2^{(2)}(n-3i_1\ldots-ki_{k-2}+(k-2)(d-1))\\ 
\sum_{i_1,i_2,\ldots,i_{k-2} \ge d-1} p_2^{(2)}(n-3i_1\ldots-ki_{k-2}+(k-2)(d-1)-\frac{k(k+1)}{2}+3) \label{Rel36}
\end{eqnarray}
Similarly we have
\begin{equation}
p_k^{(d-1)}(n)=\sum_{i_1,i_2,\ldots,i_{k-2} \ge d-1} p_2^{(2)}(n-3i_1-4i_2-\ldots-ki_{k-2}+(k-2)(d-2)) \label{Rel37}
\end{equation}
The difference between arguments in the sums in \ref{Rel36} and \ref{Rel37} is $1$+$ {k \choose 2}$. Using the fact that for the number of partitions with two $d$-distant parts there is the relation
\begin{equation}
p_2^{(d)}(n)= \bigg \lfloor \frac{n-d}{2} \bigg  \rfloor, n\ge d,
\end{equation}
we get 
\begin{equation}
p_k^{(d-1)}(n)=  p_k^{(d)}(n+ {k \choose 2}) .
\end{equation}
Applying this equality to the relation \ref{Rel39} for $m=1$ completes the following statement.

\begin{thm}\label{Thm4}
The number of partitions $\lambda \vdash n$ is equal to the number of partitions $\mu \vdash 2n+d {n \choose 2} $ with $n$ $d$-distant parts
\begin{equation}
p(n)=p_{n}^{(d)}(2n+d{n \choose 2}).
\end{equation}
\end{thm}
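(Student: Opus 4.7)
The plan is to avoid the extended calculation with Corollary~\ref{Cor3} and give a direct bijection between $\mathcal{P}_n$ and the set of partitions of $2n+d\binom{n}{2}$ with exactly $n$ parts that are pairwise $d$-distant. The underlying idea is the standard ``staircase'' transformation: partitions with prescribed gaps are in one-to-one correspondence with partitions of smaller weight with arbitrary gaps, by subtracting a fixed staircase.

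Concretely, I would proceed as follows. Given $\lambda=(\lambda_1,\ldots,\lambda_\ell)\in\mathcal{P}_n$ (so $\ell\le n$), pad with zeros to a weakly decreasing $n$-tuple $(\lambda_1,\ldots,\lambda_n)$ of nonnegative integers, and define
\begin{equation*}
\mu_i := \lambda_i + (n-i)\,d + 1, \qquad i=1,\ldots,n.
\end{equation*}
I would then verify three things. First, $\mu_n=\lambda_n+1\ge 1$, so all $n$ parts of $\mu$ are positive. Second, $\mu_i-\mu_{i+1}=(\lambda_i-\lambda_{i+1})+d\ge d$, so consecutive parts of $\mu$ differ by at least $d$. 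Third, the weight satisfies
\begin{equation*}
|\mu| \;=\; |\lambda| + d\sum_{i=1}^{n}(n-i) + n \;=\; n + d\binom{n}{2} + n \;=\; 2n + d\binom{n}{2}.
\end{equation*}

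The inverse sends $\mu$ to $\lambda_i:=\mu_i-(n-i)d-1$; the three conditions ($\lambda_i\ge 0$, $\lambda_i\ge\lambda_{i+1}$, $|\lambda|=n$) translate directly into $\mu_n\ge 1$, $d$-distance of $\mu$, and the prescribed weight, each of which is in hand by hypothesis. Discarding trailing zeros recovers a partition in $\mathcal{P}_n$, so the two maps are mutually inverse.

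I don't expect any real obstacle; the only thing to ``find'' is the correct shift, and even that is forced: writing $\mu_i=\lambda_i+a_i$, the $d$-distant condition demands $a_i-a_{i+1}\ge d$ for every $\lambda$, so one takes equality $a_i-a_{i+1}=d$, while positivity of $\mu_n$ together with the weight constraint $\sum a_i=n+d\binom{n}{2}$ pins down the constant and yields $a_i=(n-i)d+1$. As a sanity check, the algebraic route sketched in the excerpt---iterating Corollary~\ref{Cor3} to obtain $p_k^{(d-1)}(n)=p_k^{(d)}(n+\binom{k}{2})$ and chaining it $d$ times starting from the $m=1$ case of (\ref{Rel39}), $p(n)=p_n(2n)=p_n^{(0)}(2n)$---delivers the same identity, but the bijective proof above bypasses the floor-function bookkeeping hidden inside $p_2^{(2)}(n)=\lfloor(n-2)/2\rfloor$ that makes the algebraic shift come out to exactly $\binom{k}{2}$.
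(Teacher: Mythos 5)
Your bijection $\mu_i=\lambda_i+(n-i)d+1$ is exactly the map the paper uses (add $1$ to each of the $n$ padded parts, then add the staircase $(n-1)d,\ldots,d,0$), and your verification of positivity, $d$-distance, weight, and invertibility is correct --- indeed more explicit than the paper's own write-up. Same approach, correctly executed; nothing further to add.
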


\begin{proof}
Let $n= \lambda_1+...+\lambda_n$, $\lambda_n \ge 0$ be a parition of $n$. We present a bijection between this partition and partition of $2n+\binom{n}{2}d$ having $n$ parts. Obviously, it holds
$$2n=(\lambda_1+1)+...+(\lambda_n+1).$$

In the same manner we add $(n-1)d,...,2d,d,0$ to parts $\lambda_i$, respectively, keeping the distant $d$ among parts. The fact that resulting partition has weight $2n+\binom{n}{2}d$ and consists of $n$ parts, completes the proof. 
 
\end{proof}

Apparently, the statement of Theorem \ref{Thm4} holds for any other natural number $m$ i.e.,
\begin{eqnarray}
p(n)=p_{mn}^{(d)}(mn+n+d{n \choose 2}), \enspace m\ge1.
\end{eqnarray}
In other words, for every $n,d \in {\mathbb N}_0$ there are infinitely many sets of partitions whose length is a multiple of $n$ and whose parts are $d$-distant, that are equinumerous to $p(n)$.

\section*{Acknowledgement}
This work started during the first author's  stay at the Isaac Newton Institute in Cambridge. He thanks the Institute's personnel for their hospitality.

{\it Contacts:}\\ \\
\noindent Ivica Martinjak, \\ www: http://imartinjak.wordpress.com\\ 

\noindent Dragutin Svrtan,\\ 
E-mail address: dsvrtan@math.hr

\end{document}